 \DeclareMathOperator{\soc}{soc}
\DeclareMathOperator{\aut}{Aut}
\DeclareMathOperator{\End}{End}
\newcommand{\N}{\mathbb N}
\newtheorem{thm}{Theorem}[section]
\newtheorem{cor}[thm]{Corollary}
 \newtheorem{lemma}[thm]{Lemma}
\newtheorem{prop}[thm]{Proposition}
\numberwithin{equation}{section}
\renewcommand{\footnote}{\endnote}
\newcommand{\ignore}[1]{}\makeglossary
\begin{document}
	\bibliographystyle{amsplain}
	\subjclass{20E07, 11M41, 20P05} 
	\keywords{Zeta function, Probabilistic zeta function, Profinite groups}
\title[Probabilistic zeta function]{Profinite groups in which the probabilistic zeta function has no negative coefficients}

\author{Eloisa Detomi}
\address{
Eloisa Detomi\\ Universit\`a degli Studi di Padova\\  Dipartimento di Ingegneria dell'Informazione \\email: eloisa.detomi@unipd.it}

\author{Andrea Lucchini}
\address{
Andrea Lucchini\\ Universit\`a degli Studi di Padova\\  Dipartimento di Matematica \lq\lq Tullio Levi-Civita\rq\rq \\email: lucchini@math.unipd.it}

\begin{abstract} 
 To a finitely generated profinite group $G$, a formal Dirichlet series $P_G(s)=\sum_{n \in \mathbb N} {a_n(G)}/{n^s}$ is associated, where $a_n(G)=\sum_{|G:H|=n}\mu(H, G)$ and 
  $\mu(H,G)$ denotes the M\"obius function of the lattice of open subgroups of $G.$ Its formal inverse $P_G^{-1}(s)$ is the probabilistic zeta function of $G$. 
  When $G$ is prosoluble,  every coefficient of $(P_G(s))^{-1}$ is nonnegative. In this paper we discuss the general case and we produce 
   a non-prosoluble finitely generated group $G$  with the same property. 
 \end{abstract}
\maketitle

\section{Introduction} 
Let $G$ be a finitely generated profinite group, that is a
profinite group {\it topologically} generated by a finite number
of elements. For each positive integer $n$,  the number of
open subgroups of index $n$  in  $G$ is finite. So we can define a
formal Dirichlet series $P_{G}(s)$ as follows:
$$P_G(s)=\sum_{n \in \mathbb N}\frac
{a_n(G)}{n^s}\quad \quad {\mbox{ with }}\quad
a_n(G)=\sum_{|G:H|=n}\mu(H, G)$$
 where $\mu(H,G)$ denotes the M\"obius function of the lattice of open subgroups of $G,$ defined recursively by 
 $ \mu(G,G)=1$ and  $\sum_{H\leq K}\mu(K,G)=0$ for any proper open subgroup $H$ of $G$. 

The formal inverse of $P_G(s)$ is the probabilistic zeta function which was first introduced by A. Mann \cite{PFG} for finitely generated groups and by  N. Boston \cite{boston} in the case of finite groups. 
 Hall \cite{hall} showed that for   a finite group $G$ and a positive integer  $t$, $P_G(t)$ is equal to the probability that $t$ randomly chosen elements generate $G$. In  \cite{PFG} A. Mann made a conjecture which implies that $P_G(s)$ has a similar meaning for positively finitely generated (PFG) profinite groups: here we say that a profinite group is PFG if there exists a positive integer $t$ such that $\mu(\Omega(t))>0$ where $\mu$ is  the normalised Haar measure uniquely defined on $G^t$  and  $\Omega(t)$ is the set of the generating $t$-tuples  in $G$ (in the topological sense). 
 Namely Mann conjectured  that if $G$ is PFG, then the Dirichlet series $P_G(s)$ is absolutely convergent on some complex half-plane 
  of $\mathbb{C}$ and 
    takes the values $\mu(\Omega(t))>0$  for (sufficiently large) integers $t$.   This conjecture was proved true for some  classes of profinite groups, among which finitely generated prosolvable groups \cite{A:sol}  and more generally, any PFG group $G$ with the property that,  for each open normal subgroup $N$ of $G$, all the composition factors of $G/N$ are either abelian or alternating groups \cite{A:PBMN} (see also \cite{Mann:arith} and \cite{A:X} for other classes).
 But even when the convergence of $P_G(s)$ is not ensured, this  Dirichlet series  encodes information about the lattice generated by the open subgroups of $G$   and combinatorial properties of the sequence $\{ a_n(G) \}$ reflect on the structure of $G$. For example,  a finitely generated profinite group is prosoluble if and only if the sequence $\{ a_n(G) \}$ in multiplicative (see  \cite{sol-fin} and \cite{sol}).  

In \cite{revers} the authors examine profinite groups in which 
 the probabilistic zeta function coincides with the subgroup zeta function 
$\zeta_G(s)=\sum_{n \in \mathbb N}\frac
{\beta_n(G)}{n^s}$ where $\beta_n(G)$ is the number of subgroups of index $n$ in $G$.  If this is the case, the  probabilistic zeta function $(P_G(s))^{-1}$ 
 is a Dirichlet series whose coefficients are  all  nonnegative. 
  For example, if $G$ is a prosoluble group,  then $P_G(s)$ can be written as $ \prod_{i \in \mathbb{N}}(1-c_i/q_i^s)$ where $q_i$ is a prime power and $c_i \ge 0$, hence all coefficients of $(P_G(s))^{-1}$ are nonnegative.   In \cite{revers}  the authors ask if there exists non-prosoluble examples of finitely generated groups for which the probabilistic zeta function is a Dirichlet series with nonnegative coefficients. 

In this paper we answer in positive to this question. 
 \begin{thm}\label{th:main}
 There exists a  non-prosoluble group $G$ such that $G$ is finitely generated and every coefficient of $(P_G(s))^{-1}$ is nonnegative. 
 \end{thm}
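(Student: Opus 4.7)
The plan rests on the multiplicativity of $P_G(s)$ across the composition structure of $G$. For a finitely generated profinite group $G$, the series $P_G(s)$ admits an Euler-type factorization $P_G(s)=\prod_i E_i(s)$ indexed by the crowns of $G$ (equivalence classes of chief factors), with each $E_i(s)$ a local factor. For an abelian chief factor of type $\mathbb{F}_p^n$ the local factor has the form $E(s)=1-c/p^{ns}$ with $c\ge 0$, so its reciprocal $E(s)^{-1}=\sum_{k\ge 0}c^k/p^{nks}$ has nonnegative Dirichlet coefficients. Since Dirichlet convolution preserves nonnegativity, it is enough to arrange that every $E_i(s)^{-1}$ has nonnegative coefficients; the obstacle is thus concentrated at the nonabelian chief factors.

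My approach would be first computational and then structural. For small nonabelian simple groups $T$ (for example $A_5$, $\psl(2,7)$, $A_6$, or $\psl(2,11)$) and for various monolithic groups $L$ with socle isomorphic to $T^k$ and suitable outer-automorphism action, I would compute the associated local Euler factor $E_L(s)$ using the M\"obius function on the subgroup lattice of $L$ (possibly with computer-algebra aid), and examine the initial Dirichlet coefficients of $E_L(s)^{-1}$. Once a candidate $L$ is found whose local factor admits a reciprocal with nonnegative coefficients, I would construct $G$ as a profinite group realizing this chief factor: for instance as a direct product $G = L \times K$ with a prosoluble $K$ of coprime order, or as an inverse limit of iterated extensions whose unique nonabelian chief factor is of $L$-type, all other chief factors being abelian. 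Finite generation of $G$ would follow from standard generator estimates for monolithic groups and for the prosoluble complement, while non-prosolubility is automatic from the presence of $T$.

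The principal obstacle is the existence of a working $L$. A direct computation with $T=A_5$ yields $P_{A_5}(s)^{-1}$ with a negative coefficient at $n=20$, so the simple group $A_5$ by itself does not suffice; the construction must exploit a nontrivial outer action, diagonal embeddings in $T^k$ for $k \ge 2$, or nonsplit module extensions, so as to generate enough positive M\"obius contributions to dominate the negative ones in the reciprocal expansion. Since nonnegativity must be verified for all (infinitely many) Dirichlet coefficients --- not merely the first few --- a structural argument is required: ideally a closed-form factorization $E_L(s)=\prod_j (1-c_j/n_j^s)$ with $c_j \ge 0$, mirroring the prosoluble case. Producing such an $L$ and establishing this factorization would constitute the heart of the proof.
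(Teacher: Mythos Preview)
Your proposal is not a proof but a research plan, and its central hypothesis is precisely what the paper's authors state they cannot verify. You reduce the problem to finding a monolithic group $L$ with nonabelian socle whose local Euler factor $E_L(s)$ has a reciprocal with nonnegative Dirichlet coefficients (equivalently, a factorization $E_L(s)=\prod_j(1-c_j/n_j^s)$ with $c_j\ge 0$). But Section~\ref{simple} of the paper shows that $(P_T(s))^{-1}$ has a negative coefficient for every alternating group $T$ and for the simple groups of Lie type they examine, and the introduction states explicitly: ``At the present we do not have example of a finite non-soluble group whose probabilistic zeta function has nonnegative coefficients.'' Your suggested remedies (outer actions, diagonals in $T^k$, nonsplit extensions) are speculative; none is shown to work, and if one did, it would yield a \emph{finite} non-soluble example, which is a stronger and apparently open problem.

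The paper's construction abandons the requirement that each local factor have nonnegative reciprocal. Instead it builds $G$ as an inverse limit of $G_k=\prod_{i=1}^k A_{m_i}^{f_i}$: whenever $(P_{G_k}(s))^{-1}$ first goes negative, say at $n=m_{k+1}$, one appends $A_{m_{k+1}}^{f_{k+1}}$ with $f_{k+1}=-c_{m_{k+1}}(G_k)/m_{k+1}$, which cancels that coefficient exactly (using that the first nontrivial term of $(P_{A_m^f}(s))^{-1}$ is $+fm/m^s$). Negative coefficients are thus pushed to ever larger indices and disappear in the limit. The real work, which your proposal has no analogue of, is controlling growth: one needs $f_{k+1}\le (m_{k+1}!)^c$ uniformly so that each $G_k$ stays $(c+2)$-generated (Propositions~\ref{prop:d}, \ref{prop:a_n}, \ref{prop:c_n}), and this requires the M\"obius and subgroup-count bounds of Theorems~\ref{valentina} and~\ref{andrea-nontriv}. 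The key conceptual point you are missing is that nonnegativity of a Dirichlet product does not require nonnegativity of each factor's reciprocal; the paper exploits exactly this freedom.
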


Note that the group constructed in Theorem \ref{th:main} is a direct product of alternating groups. This group is finitely generated but, possibly, not PFG. 
 
At the present we do not have example of a finite non-soluble group whose  probabilistic zeta function has nonnegative coefficients. 
 All the nonabelian finite  simple groups that we examined have a negative coefficient in their   probabilistic zeta function (see  Section \ref{simple}).
 For example, for the alternating group of degree $5$ we have
 \[P_{A_5}(s) =1-5/5^s -6/6^s-10/10^s+20/20^s +60/30^s-60/60^s  \] 
 and the  probabilistic zeta function $(P_{A_5}(s))^{-1}$ has a negative coefficient for $n=20$ (namely $-20$). 
 For any group $G$, the series $P_G(s)$ can be written as an (infinite formal) product of finite Dirichlet polynomials according to a chief series of $G$ as shown in \cite{crown} and in \cite{crown-profin}. Using this factorization, it is possible to construct  large finite groups in which all first terms of the 
   probabilistic zeta function are positive. For example, 
 in the last section we show that  the  probabilistic zeta function of $G=C_2^2  \times C_5^2 \times A_5$ has  negative coefficients and the first one appears  for $n=50000$. Trying to construct extension of this group by adding abelian chief factors is not a good strategy because the number of generators of the new group might be quite larger (for example, the minimal number of generators of $C_2^2 \times C_5^2 \times G$ is $4$). We will show that  
   it is possible to construct an inverse system 
  of groups $G_k$ starting from a (large enough) alternating group and extending it with direct products of alternating groups in such a way that at each step the \lq\lq first" coefficients   $(P_{G_k}(s))^{-1}$ are nonnegative   and the minimal number of generators of   $G_k$ is bounded. Hence  the resulting inverse limit $\varprojlim G_k$  is actually finitely generated 
  and its  probabilistic zeta function has no negative coefficients. 
 
 \section{Preliminaries} 
 
 In this paper we are mainly interested in finitely generated profinite groups, so,
unless stated otherwise,  \lq\lq group''  means  finitely generated profinite group and   \lq\lq subgroups''
  means closed subgroups. Moreover, in a profinite group $G$, open subgroups have finite index in $G$, and  $G$ is (topologically) $d$-generated if and only if   it is the inverse limit of $d$-generated finite groups. 
  
  We define the M\"obius function of the lattice of open subgroups of $G$  by 
 $ \mu(G,G)=1$ and  
 \[ \sum_{H\leq K}\mu(K,G)=0 \] 
 for any proper open subgroup $H$ of $G$. 
    Then we define 
formal Dirichlet series 
$$P_G(s)=\sum_{n \in \mathbb N}\frac{a_n(G)}{n^s}$$ 
as the Dirichlet  generating function associated with the sequence 
$$a_n(G)=\sum_{|G:H|=n}\mu(H,G).$$

In \cite[Theorem 13]{crown-profin} the authors show that $P_G(s)$ factorizes through a normal subgroup $N \le G$, namely $P_G(s)=P_{G/N}(s) P_{G,N}(s),$ where  
$$P_{G,N}(s)=\sum_{n \in \mathbb N}\frac
{a_n(G,N)}{n^s}\quad \quad {\mbox{ with }}\quad
a_n(G,N):=\sum_{\substack{|G:H|=n\\ HN=G}}\mu(H,G).$$ 
Furthermore, by iteration they obtain that given a chief series 
$\Sigma: G=N_0 \unrhd N_1 \unrhd \dots \unrhd  G_{\tau}=1  $, where $\tau \le \aleph_0$, 
  the  Dirichlet series 
$P_G(s)$ can be written as a formal product 
\[ P_{G}(s)= \prod_{i \ge 0} P_{G/N_{i+1},N_i/N_{i+1}}(s),\]
and they shows that the factorization is independent of $\Sigma$ \cite[Theorem 17]{crown-profin}. 

To describe the finite Dirichlet series  $P_{G/N_{i+1},N_i/N_{i+1}}(s)$ we need 
 the concept of crowns. 
 This notion was first introduced by  Gasch\"utz in \cite{g3} for chief factors of  a finite
soluble group $G$, and later generalized
to all finite groups (see for example \cite{lafuente} and
\cite{paz}). In \cite{crown} the notion of crown have been applied to study some
properties of the probabilistic zeta function of a finite group, and in \cite{crown-profin} extended to profinite groups. 
 We refer to \cite{crown-profin} for the details, but 
 for short we say that two chief factors of  $G$ are $G$-equivalent (as $G$-groups) if they are $G$-isomorphic either between them or 
 to the two minimal normal subgroups of a finite primitive epimorphic image of $G$ (having two minimal normal subgroups). In particular, two $G$-equivalent factors are equivalent as groups. 
  Given a chief series $\Sigma$ 
   of $G$, for any  chief factor $A$ we set $\delta_G(A)$ to be the cardinality of the set of the chief factors $N_i/N_{i+1}$ which are not Frattini and are $G$-equivalent to $A$; this number does not depend on the chosen chief series and has the remarkable property that $G$ has  a section $G$-isomorphic to $A^{\delta_G(A)}$ (called the $A$-crown of $G$). 
  For example, if $G= \prod_i A_{m_i}^{f_i}$   is a direct product of copies of alternating groups, 
  for some integers $f_i$ and $m_i \neq m_j$, then $\delta_G(A_{m_i}) =  f_i$. 
   
 Let $A$ be  a chief factor of $G$ 
  and let $\rho:G \rightarrow \aut(A)$ be defined by $g\mapsto
g^{\rho}$, where $g^{\rho}:a \mapsto a^{g}$ for all $a \in A$.
The \emph{monolithic primitive group associated with $A$}  is defined as
$$L_A=
  \begin{cases}
    G^{\rho}  A \cong(G/C_G(A)) A& \text{ if $A$ is abelian}, \\
    G^{\rho} \cong G/C_G(A)& \text{ otherwise}, 
  \end{cases}
$$ 
 and we identify $\soc(L_A)$ with $A$. 
Then we can  write $P_G(s)$ as a formal product 
\begin{equation}\label{**}
P_G(s)=\prod_A \left(  \left( P_{L_A,A}(s) \right) \cdots \left(P_{L_A,A}(s) - \frac{(1+q+\dots+q^{\delta_G(A)-2})\gamma}{|A|^s}   \right) \right)
\end{equation}
 where $A$ runs over the set of representatives (under the $G$-equivalence relation) of the non-Frattini chief factors in a given chief series $\Sigma$,  
 $L_A$ is the monolithic primitive group associated with $A$, $\gamma =|C_{\aut A} (L_A/A)|$ and  $q=|\End_{L_A} A|$ if $A$ is
abelian, $q=1$ otherwise. 
Moreover, the factorization of $P_G(s)$ does not depend on the chosen chief series $\Sigma$. 
In particular, if $G$ is a prosoluble group, then 
\[ P_G(s) = \prod_{q_i} \left( 1-\frac{c_i}{q_i^s} \right) \] 
for some positive integer $c_i$ and some prime powers $q_i$. 

In the case where $G$ is a finite group and $G = H \times K$, where $H$ and $K$ have no common isomorphic chief factors, it is possible to prove that 
 the formula \ref{**} reduces to the result of Brown \cite{brown}
 \begin{equation}\label{eq:brown} P_G(s) = P_H(s) P_K(s) 
 \end{equation}
  and the minimal number of generators $d(G)$ of $G$ is the minimum between the minimal number of generator of $H$ and of $K$ (see \cite{dv-l} for the complete  result on $d(G)$).  
Moreover, if $G=S^f$  for a non-abelian finite simple group $S$ and an integer $f$, then the formula \ref{**} reduces to the result of Boston \cite{boston}
 \begin{equation}\label{eq:boston} 
 P_{S^f}(s) = \prod_{i=0}^{f-1} \left( P_S(s) - \frac{i | \aut(S)| }{|S|^s} \right).
 \end{equation}

Let us denote by $c_n(G)$ the coefficients of the probabilistic zeta function of $G$: 
  $$(P_{G}(s))^{-1} =\sum_{n \ge 1} \frac{c_n(G)}{n^s}.$$
 Basic properties of the coefficients $a_n(G)$ and $c_n(G)$ are summarised in the following lemmas.
\begin{lemma}\label{c-base} 
Given a group $G$, the following relations hold:  
\begin{enumerate}
\item $ c_n(G) = - \sum_{\substack{rs=n \\ r\neq 1}} a_r(G)c_s(G);$
\item $ |c_n(G) |\le   \sum_{n_1 \cdots n_t=n} |a_{n_1}(G) \dots a_{n_t}(G)|$, where the sum runs over the set of  all ordered factorizations of $n.$
\end{enumerate}
 \end{lemma}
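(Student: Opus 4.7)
The plan is standard Dirichlet-series manipulation. For part (1), I would start from the defining identity
\[ P_G(s)\cdot (P_G(s))^{-1} = 1 \]
viewed as a formal product of Dirichlet series. Expanding the product and comparing coefficients of $n^{-s}$ on both sides yields
\[ \sum_{rs=n} a_r(G)\,c_s(G) \;=\; \begin{cases} 1 & \text{if } n=1,\\ 0 & \text{if } n>1.\end{cases} \]
Since $a_1(G) = \mu(G,G) = 1$, the case $n=1$ forces $c_1(G)=1$, and for $n>1$ the isolated term $a_1(G)\,c_n(G) = c_n(G)$ can be separated from the rest of the sum, giving
\[ c_n(G) \;=\; -\sum_{\substack{rs=n\\ r\neq 1}} a_r(G)\,c_s(G), \]
which is exactly the formula in (1).

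For part (2), I would argue by strong induction on $n$. The base case $n=1$ gives $|c_1(G)| = 1$, which matches the empty/trivial ordered factorization. For $n>1$, apply the triangle inequality to the formula in (1):
\[ |c_n(G)| \;\le\; \sum_{\substack{rs=n\\ r\neq 1}} |a_r(G)|\,|c_s(G)|. \]
For each term, if $s=1$ then $|c_1(G)|=1$ and the contribution $|a_n(G)|$ corresponds to the length-one factorization $n_1=n$; if $s>1$, the inductive hypothesis bounds $|c_s(G)|$ by the sum of $|a_{s_1}(G)\cdots a_{s_u}(G)|$ over ordered factorizations $s_1\cdots s_u = s$ with each $s_j>1$. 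Prepending $r$ to each such factorization of $s$ produces precisely the ordered factorizations $n_1\cdots n_t = n$ with $n_1 = r > 1$, and running over all choices of $r$ exhausts all ordered factorizations of $n$ into factors greater than $1$. Collecting the contributions yields the claimed bound.

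There is no real obstacle here: both statements are formal consequences of the Dirichlet convolution structure. The only minor bookkeeping point is to confirm that the convention for ``ordered factorizations'' matches the recursion, i.e.\ that factors are required to be strictly greater than $1$ so that the induction terminates; this is implicit in the statement because otherwise one could insert arbitrarily many $1$'s and the bounding sum would be meaningless.
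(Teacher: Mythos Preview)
Your argument is correct. The paper itself states this lemma without proof, treating both items as immediate consequences of the formal Dirichlet convolution identity $P_G(s)\cdot(P_G(s))^{-1}=1$; your write-up supplies exactly the routine verification one would expect, including the appropriate convention that ordered factorizations use only factors strictly greater than $1$.
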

 
 \begin{lemma}\label{n-divide-c_n}
If $G$ is a finite perfect group, then $n$ divides $a_n(G)$ and $c_n(G)$ for every $n$. 
 \end{lemma}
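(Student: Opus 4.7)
The two divisibility claims are linked, and the second follows from the first by strong induction using Lemma~\ref{c-base}(1). Since $c_1(G)=1$, and for $n>1$
\[
c_n(G)=-\sum_{\substack{rs=n\\ r>1}}a_r(G)\,c_s(G),
\]
every summand has $r>1$ and therefore $s<n$; inductively $s\mid c_s(G)$, and, granting $n\mid a_n(G)$, we have $r\mid a_r(G)$, so $rs=n$ divides each summand.

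For $n\mid a_n(G)$, the natural tool is the crown factorization \eqref{**}, namely $P_G(s)=\prod_A F_A(s)$, where $A$ ranges over representatives of the non-Frattini chief factors of $G$ and $F_A(s)$ is the finite Dirichlet polynomial associated with $A$. A first observation is that the property ``$n\mid a_n$ for all $n$'' is preserved under Dirichlet multiplication: if $f(s)=\sum_n p_n/n^s$ and $g(s)=\sum_n q_n/n^s$ both satisfy it, then for $(fg)_n=\sum_{ab=n}p_a q_b$ one has $a\mid p_a$ and $b\mid q_b$, whence $ab\mid p_a q_b$. It is therefore enough to verify the property for each factor $F_A(s)$, and in turn for each single factor of the form $P_{L_A,A}(s)-c/|A|^s$ appearing in $F_A(s)$. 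The support of $P_{L_A,A}(s)$ is contained in the divisors $n$ of $|A|$, since for any supplement $H$ of $A$ in $L_A$ one has $|L_A:H|=|A:A\cap H|$; the shift $c/|A|^s$ lives at $n=|A|$, where divisibility is automatic.

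The real content is therefore the divisibility $n\mid a_n(L_A,A)$ for every non-Frattini chief factor $A$ of the perfect group $G$. Here one exploits that $L_A/A$ is perfect (being a quotient of $G$) and analyses the conjugation action of $L_A$ on the set of its supplements to $A$: each orbit has size a divisor of $|L_A:H|=n$, and a careful bookkeeping of the M\"obius values inside each orbit yields the required divisibility. This is the principal obstacle, and it splits naturally into two cases: when $A$ is non-abelian one treats $L_A$ as an almost simple group with socle $A$; when $A$ is abelian one uses the semidirect decomposition $L_A=(L_A/A)\ltimes A$ together with $H^1$ of the perfect group $L_A/A$ with coefficients in $A$ to count and organise the complements contributing to $a_n(L_A,A)$.
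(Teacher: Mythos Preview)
Your reduction of the divisibility of $c_n(G)$ to that of $a_n(G)$ via Lemma~\ref{c-base}(1) is correct and is exactly what the paper does. The difference lies entirely in how you handle $n\mid a_n(G)$.

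The paper's argument is a two-line application of a theorem of Hawkes, Isaacs and \"Ozaydin \cite[Theorem 4.5]{how}: for any subgroup $H$ of a finite group $G$, the index $|N_G(H):H|$ divides $\mu(H,G)\,|G:HG'|$. When $G$ is perfect this gives $|N_G(H):H|\mid\mu(H,G)$, so the contribution of the conjugacy class of $H$ to $a_n(G)$, namely $|G:N_G(H)|\cdot\mu(H,G)$, is divisible by $|G:H|=n$.

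Your route through the crown factorization is a genuine detour, and the place where you stop short is precisely the place where the same tool is needed. You reduce to showing $n\mid a_n(L_A,A)$ and then appeal to ``the conjugation action of $L_A$ on supplements'' and ``careful bookkeeping of the M\"obius values inside each orbit''. But the orbit of a supplement $H$ has size $|L_A:N_{L_A}(H)|$, and for its contribution $|L_A:N_{L_A}(H)|\cdot\mu(H,L_A)$ to be divisible by $n=|L_A:H|$ you need exactly $|N_{L_A}(H):H|\mid\mu(H,L_A)$, i.e.\ Hawkes--Isaacs--\"Ozaydin for the perfect group $L_A$. (One can check that $L_A$ is indeed perfect when $G$ is perfect and $A$ is non-Frattini, but you do not address this.) The proposed split into non-abelian and abelian cases with an $H^1$ analysis does not circumvent this; it only repackages the same orbit count. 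Once you grant the Hawkes--Isaacs--\"Ozaydin input, there is no reason to pass through the crown factorization at all: the argument applies directly to $G$.

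A minor additional gap: the claim that divisibility of the shift term $c/|A|^s$ at $n=|A|$ is ``automatic'' is not obvious for abelian $A$; it requires knowing that $|A|$ divides $(1+q+\dots+q^{j-1})\gamma$, which needs its own justification.
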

\begin{proof} 
Let $H$ be a subgroup of $G$ of index $n$. 
By \cite[Theorem 4.5]{how}, the index $|N_G(H):H|$ divides $\mu(H,G)|G:HG'|$. Since $G$ is perfect, it follows that $n=|G:H|$ divides $\mu(H,G)|G: N_G(H)|$. So, in particular $n$ divides $a_n(G)$. By Lemma \ref{c-base}, we deduce that $n$ divides also $c_n(G)$. 
\end{proof}
 
  Recall that if $H$ is an open subgroup of $G$ and $\mu(H,G) \neq 0$, then $H$ is an intersection of maximal subgroups of $G$. 
 We are interested in subgroups that give a nontrivial contributions to $a_n(G)$, 
  so we set 
  $$b_n(G) = |\{H \le G \mid \mu(H,G) \neq 0, \ |G:H|=n\}|.$$
 The next two results will be the key ingredient to construct our group.  
 \begin{thm}\cite[Theorems 1-2]{vale}\label{valentina}
 There exist two absolute constants $\alpha$  and $\beta$ such that for any $m \in \N$, if $G$ is an alternating or symmetric group, 
 then 
 \begin{enumerate}
 \item $b_n(G) \le n^{\alpha}$, for every integer $n$. 
 \item $|\mu(H,G)| \le |G:H|^{\beta}$ for every subgroup $H$ of $G$. 
 \end{enumerate}
 \end{thm}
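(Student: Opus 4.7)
The plan is to exploit the classification of maximal subgroups of $\sym(m)$ and $\alt(m)$ via the O'Nan--Scott theorem, together with the basic observation that $\mu(H,G)\ne 0$ forces $H$ to be an intersection of maximal subgroups of $G$. Every maximal subgroup of $S_m$ falls into one of three families: intransitive subgroups $S_k\times S_{m-k}$, transitive imprimitive subgroups inside a wreath product $S_k\wr S_{m/k}$, and primitive subgroups. I would stratify the relevant $H$ according to the smallest family of maximal subgroups whose intersection is $H$, estimate each stratum separately, and close the argument by induction on $m$.

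For the primitive stratum I would invoke Bochert's theorem: any primitive maximal subgroup of $S_m$ distinct from $A_m$ has index at least $\lceil (m+1)/2\rceil!$, which is superexponential in $m$. Combined with the Liebeck--Pyber--Shalev estimate for the total number of maximal subgroups of $S_m$, this forces the primitive contribution to $b_n(G)$ and to $|\mu(H,G)|$ to be negligible on any polynomial scale in the index. For the intransitive stratum, an $H$ lying in the Young subgroup $Y_\pi=\prod_i S_{k_i}$ corresponding to its orbit partition $\pi$ admits, via Hall's product formula applied to $[H,Y_\pi]\subset [H,G]$, a factorisation of $\mu(H,G)$ as a product of Möbius values of the transitive constituents (in smaller symmetric groups), together with an explicit combinatorial factor coming from $|G:Y_\pi|=m!/\prod k_i!$. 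Since the number of orbit partitions is bounded polynomially in $m$, induction on $m$ propagates both inequalities to the intransitive case.

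The imprimitive stratum is handled by an analogous descent: if $H$ preserves a non-trivial block system with blocks of size $k$, then $H$ embeds in $S_k\wr S_{m/k}$, and one factors $\mu(H,G)$ through the induced actions on the base $S_k^{m/k}$ and on the top $S_{m/k}$, reducing the question to bounds that are already available by the inductive hypothesis. Combining the three strata, both the number of index-$n$ subgroups with non-zero Möbius function and the size of that Möbius function are controlled polynomially in $n$ by absolute exponents, as required.

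The main obstacle is the imprimitive case, because the wreath product $S_k\wr S_{m/k}$ harbours enormously many subgroups (by Pyber's bound the total count is $2^{\Theta(m^2)}$), and a union bound over maximal subgroups is hopeless. One has to show that, restricted to intersections of maximal subgroups with $\mu\ne 0$, the dominant contribution comes from the base rather than from the top; this is essentially the content of Shareshian-type calculations of the Möbius function of subgroup lattices of wreath products, and it is precisely here that the absolute constants $\alpha$ and $\beta$ are pinned down.
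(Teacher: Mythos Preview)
This theorem is not proved in the paper under review: it is quoted from Colombo--Lucchini \cite{vale}, so there is no in-paper proof to compare your proposal against. Your outline is broadly in the spirit of the approach taken in that reference, which does stratify according to the O'Nan--Scott type of the maximal overgroups and exploits the large index of primitive maximal subgroups.

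That said, one step of your sketch is not correct as written. In the intransitive case you claim that Hall's product formula yields a factorisation of $\mu(H,G)$ as a product of the M\"obius values of the transitive constituents of $H$, times an explicit combinatorial factor. No such factorisation holds in general: even when $H\le Y_\pi=\prod_i S_{k_i}$, the interval $[H,G]$ in the subgroup lattice of $G=S_m$ is not the product of the intervals $[H_i,S_{k_i}]$ (there are many subgroups strictly between $Y_\pi$ and $G$, and diagonal-type subgroups inside $Y_\pi$), so $\mu(H,G)$ is not determined by the $\mu(H_i,S_{k_i})$ alone. The argument in \cite{vale} relies instead on a detailed structural description, building on Shareshian's work on the M\"obius function of $S_m$, of which intransitive and imprimitive subgroups can have nonzero M\"obius value; the polynomial bounds then follow by counting the restricted configurations that description permits. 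Without that structural input your inductive descent does not close, and the imprimitive case---which you yourself flag as the main obstacle---remains a genuine gap in your sketch rather than a step that merely needs to be filled in.
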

 
  \begin{thm}\cite[Theorem 9]{nontrivialM}\label{andrea-nontriv}
  Suppose that $G$ is a $d$-generated profinite group, and that there exists a
constant $a$ with the following property: for any epimorphic image $L$ of $G$ which is monolithic with non-abelian socle and for any 
 $X \le  L$
 $$| \mu(X,L)| \le |\mu(X\soc(L), L)| \cdot |X\soc(L):X|^a.$$
 Then
 $$| \mu(H,G)| \le  |G: H|^{\tilde{a}}$$
for each open subgroup $H$ of $G$, where 
$\tilde{a}=\max(a+1, d+1)$. 
 \end{thm}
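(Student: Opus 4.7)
The plan is to reduce to the finite case and argue by induction on $|G|$, using the crown structure to relate $\mu(H,G)$ to Möbius values in monolithic quotients. Since $H$ is open, it contains some open normal subgroup $K\lhd G$, and the lattice isomorphism $[H,G]\cong [H/K,G/K]$ gives $\mu(H,G)=\mu(H/K,G/K)$; because $G/K$ is finite and $d$-generated, and every monolithic quotient of $G/K$ is also a monolithic quotient of $G$, the hypothesis descends and one may assume $G$ is finite.

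For the inductive step, fix a minimal normal subgroup $N$ of $G$. If $N\le H$, then $[H,G]\cong [H/N,G/N]$, so $\mu(H,G)=\mu(H/N,G/N)$; since $G/N$ is $d$-generated and inherits the hypothesis, induction yields $|\mu(H,G)|\le |G:H|^{\tilde a}$ immediately. Otherwise, let $L$ be the monolithic primitive group associated with the chief factor $N$, write $A=\soc(L)$, and let $X$ denote the image of $H$ in $L$. The aim is to express (or bound) $\mu(H,G)$ essentially as a product of $\mu(X,L)$ with a Möbius value in the quotient $G/N$, to which induction applies.

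If $N$ is non-abelian, the hypothesis gives $|\mu(X,L)|\le |\mu(XA,L)|\cdot |XA:X|^a$, and since $\mu(XA,L)=\mu(XA/A,L/A)$ corresponds to a Möbius value in the $d$-generated group $L/A$, a quotient of $G/N$, bounded by induction, multiplication produces the target bound with exponent $a+1\le\tilde a$. If $N$ is abelian, $L=(G/C_G(N))\ltimes A$ is affine, and complements of $A$ in $L$ are parameterized by $Z^1(L/A,A)$, a set of size at most $|A|^{d(L/A)}\le |A|^{d}$; bounding $|\mu(X,L)|$ by counting such complements (or equivalently, the maximal subgroups of $L$ above $X$) yields a bound with exponent $d+1\le\tilde a$.

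The main obstacle is making rigorous the statement that $\mu(H,G)$ is controlled (up to absolute value) by $\mu(X,L)$ and a Möbius value in $G/N$: this is not a literal equality, since distinct subgroups of $G$ can have the same image $X$ in $L$, and the subgroups of $G$ supplementing $N$ must be accounted for carefully. This analysis rests on the crown-level factorization~\eqref{**} of $P_G(s)$, which isolates the contribution of a single chief factor, combined with the inductive control on $G/N$. Once this bookkeeping is in place, the exponents $a+1$ (non-abelian) and $d+1$ (abelian) combine to give $\tilde a=\max(a+1,d+1)$, completing the proof.
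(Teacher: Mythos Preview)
The paper does not contain a proof of this statement: Theorem~\ref{andrea-nontriv} is simply quoted from \cite[Theorem~9]{nontrivialM} and used as a black box, so there is no in-paper argument against which to compare your proposal.

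As for the proposal on its own merits, the reduction to the finite case and the inductive setup are fine, and splitting on whether the minimal normal subgroup $N$ is abelian or not is the natural case division. But you have not actually proved the theorem: you explicitly flag as ``the main obstacle'' the step that carries all the weight, namely bounding $|\mu(H,G)|$ by a product of $|\mu(X,L)|$ with a M\"obius value in $G/N$. This is not bookkeeping. There is no identity of the form $\mu(H,G)=\mu(X,L)\cdot\mu(HN/N,G/N)$, and the crown factorization~\eqref{**} is a statement about the Dirichlet series $P_G(s)$ as a whole, not about individual M\"obius values; extracting a pointwise bound on $|\mu(H,G)|$ from the factorization of $P_G(s)$ is exactly the content one has to supply. (Indeed, the argument in \cite{nontrivialM} does not go through~\eqref{**} but rather through a structural description of the subgroups with non-trivial M\"obius number, the same \cite[Theorem~1]{nontrivialM} invoked in the proof of Proposition~\ref{prop:a_n}.) In the abelian case your claim that ``bounding $|\mu(X,L)|$ by counting complements'' gives $|A|^d$ is also incomplete as stated: the number of complements of $A$ bounds the number of maximal subgroups of $L$ not containing $A$, but $|\mu(X,L)|$ for an arbitrary $X\le L$ is not a priori controlled by that count alone. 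As written, the proposal is a plausible outline of what one hopes is true rather than a proof.
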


\section{Product of alternating groups}

Let 
$c = \alpha+\beta+11$ where $\alpha$ and $\beta$ are the constants defined in Theorem \ref{valentina}. Assume that $f$ is a function from $\mathbb N$ to $\mathbb N$ with the property that 
$$f(m) \le (m!)^c,$$ and let  $\{m_i \mid i\in \mathbb N\}$ be a strictly increasing sequence of integers.
 
 Consider the product of $f(m_i)$ copies of the alternating groups $A_{m_i}$, for $i=1, \dots , k$:
 $$G_k= \prod_{i=1}^{k} A_{m_i}^{f(m_i)}.$$

Our first goal is  to prove that there exists an integer $N$ such that if $m_1\ge N$, then $G_k$ is $(c+2)$-generated and $|c_n(G_k)| \le (n!)^c$ for every  integer $n>m_k$. 
 
The proof that $G_k$ is boundedly generated, once $m_1$ is large enough, 
relies on the following result; throughout, all logarithms are to base 2. 

\begin{thm}\cite{nina}\label{colva} 
Let $S$ be a  nonabelian finite simple group. Let $h_S(d)$ be the maximal integer $h$ such that $S^h$ is $d$-generated. Then 
$$h_S(d) \ge \frac{|S|^{d-1}}{\log{|S|}},$$
 for all $d \ge 2$. 
\end{thm}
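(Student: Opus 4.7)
The plan is to apply the classical Hall--Gasch\"utz criterion: $S^h$ is $d$-generated if and only if there exist $h$ generating $d$-tuples of $S$ which are pairwise inequivalent under the componentwise action of $\aut(S)$. Since any automorphism of $S$ that fixes a generating set must act as the identity on all of $S$, this action is in fact free on the set of generating $d$-tuples. Consequently the maximum $h$ equals the number of orbits, giving the exact formula
$$h_S(d)=\frac{|S|^d\,P_S(d)}{|\aut(S)|}=\frac{|S|^{d-1}\,P_S(d)}{|\out(S)|},$$
where $P_S(d)$ is the probability that a uniformly random $d$-tuple in $S$ generates $S$.

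With this identity in hand, Theorem \ref{colva} reduces to verifying the inequality $P_S(2)\cdot\log|S|\ge|\out(S)|$ for every non-abelian finite simple $S$; the case $d\ge 2$ then follows at once because $P_S(d)\ge P_S(2)$. I would proceed via the classification of finite simple groups, bounding $|\out(S)|/\log|S|$ case by case: for alternating $S=A_n$ one has $|\out(A_n)|\le 4$ while $\log|A_n|$ grows rapidly; for sporadic $S$ one has $|\out(S)|\le 2$ and the finitely many values of $\log|S|$ can be tabulated; for a Lie type group $S$ of rank $r$ over $\mathbb{F}_q$ a direct check gives $|\out(S)|=O(\log q)$ while $\log|S|$ has leading term proportional to $r^2\log q$, so the ratio is bounded by an absolute constant $c_0<1$.

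The combinatorial input is then a matching uniform lower bound $P_S(2)\ge c_0$ over all non-abelian finite simple $S$. For this I would invoke the Kantor--Lubotzky/Liebeck--Shalev theorem that $P_S(2)\to 1$ as $|S|\to\infty$, which takes care of all but finitely many $S$, together with an ad hoc verification of the remaining small groups: for sporadic groups directly from the Atlas, for small alternating groups by hand, and for small rank-one Lie type groups using Dickson's classification of the subgroups of $\psl(2,q)$ and the corresponding M\"obius function on the subgroup lattice.

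The main obstacle is the rank-one family $S=\psl(2,q)$, where the ratio $|\out(S)|/\log|S|$ reaches its largest value (roughly $2/3$ in the worst characteristic), so one needs the sharpest available lower bound on $P_{\psl(2,q)}(2)$. Explicit M\"obius-theoretic computations on the (completely described) subgroup lattice of $\psl(2,q)$ supply such bounds and close the argument.
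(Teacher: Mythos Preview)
This theorem is quoted without proof from the external reference \cite{nina}, so there is no argument in the present paper to compare your proposal against. Your sketch is in fact the strategy carried out in \cite{nina}: the exact formula $h_S(d)=|S|^{d-1}P_S(d)/|\out(S)|$ obtained from Hall's orbit count on generating $d$-tuples, followed by the reduction to the inequality $P_S(2)\,\log|S|\ge|\out(S)|$ and a case analysis through the classification. The substantive content of \cite{nina} is precisely the sharp uniform lower bound $P_S(2)\ge 53/90$ (with equality for $S=A_6\cong\psl(2,9)$), which is exactly what is needed to close the borderline small-rank cases; you correctly identify $\psl(2,q)$ as the critical family.
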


\begin{prop}\label{prop:d}
Assume that  $m_1!/\log(m_1!/2) \ge 2^{c+1}.$ Then  $G_k$ is $(c+2)$-generated. 
\end{prop}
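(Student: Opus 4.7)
The plan is to reduce the statement to Theorem \ref{colva} applied to each factor separately, which is possible because the $A_{m_i}$ with distinct $m_i \ge 5$ are pairwise non-isomorphic simple groups and so share no common chief factors across distinct $i$. More precisely, I would invoke the formula for $d(H \times K)$ mentioned in the introduction (the Dalla Volta--Lucchini result \cite{dv-l}), which for a direct product of groups with pairwise disjoint sets of non-abelian simple chief factors gives
$$d(G_k) \;=\; \max_{1 \le i \le k} d\bigl(A_{m_i}^{f(m_i)}\bigr).$$
So it suffices to show $d(A_{m_i}^{f(m_i)}) \le c+2$ for each $i$.

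Next, I would apply Theorem \ref{colva} with $S = A_{m_i}$ and $d = c+2$. It guarantees that $A_{m_i}^h$ is $(c+2)$-generated as soon as
$$h \;\le\; \frac{|A_{m_i}|^{c+1}}{\log|A_{m_i}|} \;=\; \frac{(m_i!/2)^{c+1}}{\log(m_i!/2)}.$$
Combined with the assumption $f(m_i) \le (m_i!)^c$, it is therefore enough to verify the numeric inequality
$$(m_i!)^c \cdot \log(m_i!/2) \;\le\; (m_i!/2)^{c+1},$$
which, after multiplying both sides by $2^{c+1}/(m_i!)^c$, simplifies to
$$\frac{m_i!}{\log(m_i!/2)} \;\ge\; 2^{c+1}.$$

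The final step is a quick monotonicity observation: the function $m \mapsto m!/\log(m!/2)$ is increasing in $m$ for $m \ge 5$, so the hypothesis $m_1!/\log(m_1!/2) \ge 2^{c+1}$ propagates to every $m_i \ge m_1$, giving the displayed inequality for all $i$. Putting these pieces together yields $d(G_k) \le c+2$.

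The only genuinely delicate point is the first reduction: the formula $d(H \times K) = \max(d(H),d(K))$ requires that the non-abelian simple chief factors on the two sides be distinct as $G$-modules, not merely as abstract groups. In our setting this is immediate because each factor $A_{m_i}^{f(m_i)}$ has chief factors isomorphic only to $A_{m_i}$, and the degrees $m_i$ are strictly increasing. Everything else is a clean application of Theorem \ref{colva} together with the arithmetic hypothesis on $m_1$.
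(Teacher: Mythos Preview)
Your proof is correct and follows essentially the same route as the paper: reduce to each factor $A_{m_i}^{f(m_i)}$ via $d(G_k)=\max_i d(A_{m_i}^{f(m_i)})$ (the paper states this reduction without an explicit citation, relying on the remark after equation~\eqref{eq:brown}), then apply Theorem~\ref{colva} with $d=c+2$ and verify the numeric inequality $(m!/2)^{c+1}/\log(m!/2)\ge (m!)^c$ using the hypothesis on $m_1$ together with monotonicity. The only point the paper makes explicit that you leave implicit is that the hypothesis $m_1!/\log(m_1!/2)\ge 2^{c+1}$ already forces $m_1\ge 5$ (since $c\ge 11$), so that each $A_{m_i}$ is indeed simple; this is a one-line check.
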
 
\begin{proof} 
Note that, under the given conditions, $m_1 \ge 5,$ hence each $A_{m_i}$ is a   nonabelian finite simple group. 
Since $G_k$ is the direct product of the  groups $A_{m_i}^{f(m_i)}$, with $m_i \neq m_j$ for $i \neq j$, it is sufficient to prove that 
$A_m^{f(m)}$ is $(c+2)$-generated for every $m \ge m_1$. 

Theorem \ref{colva}  states that 
the maximal integer $h=h_{A_m}(c+2)$ such that $A_m^{h}$ is $(c+2)$-generated satisfies the bound 
$
h\ge  (m!/2)^{c+1}/ \log{(m!/2)}  $. 
As $m_1!/\log(m_1!/2) \ge 2^{c+1}$ and $m \ge m_1$,  
  we have that $m!/ \log{(m!/2)}  \ge 2^{c+1}$.  Hence 
 $$
 h \ge \frac{(m!/2)^{c+1}}{\log{(m!/2)}} \ge (m!)^{c}. $$ 
 Since $f(m) \le (m!)^c \le h, $  we conclude that  
 $A_m^{f(m)}$ is $(c+2)$-generated. 
\end{proof}

\begin{cor}\label{cor:mu}
Assume that  $m_1!/\log(m_1!/2) \ge 2^{c+1}.$ 
Then 
  $$| \mu(H,G_k)| \le  |G_k: H|^{c+3}$$
for each subgroup $H$ of $G_k$
\end{cor}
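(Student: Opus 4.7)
The plan is to invoke Theorem \ref{andrea-nontriv}, which reduces the task to (i) bounding the number of generators of $G_k$ and (ii) controlling the M\"obius function on monolithic epimorphic images with non-abelian socle. Part (i) is already handled by Proposition \ref{prop:d}, which under the standing hypothesis on $m_1$ allows us to take $d=c+2$.

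For part (ii), I would first exploit the direct-product structure of $G_k=\prod_{i=1}^k A_{m_i}^{f(m_i)}$. Since the factors are non-abelian finite simple groups, every normal subgroup of $G_k$ is a subproduct, so every epimorphic image is again a direct product of some subset of the factors $A_{m_i}^{f(m_i)}$, possibly with smaller multiplicities. Demanding that such a quotient $L$ be monolithic with non-abelian socle forces $L$ to consist of a single simple factor, so $L\cong A_{m_i}$ for some $i$. In particular $L=\soc(L)$, which makes the hypothesis of Theorem \ref{andrea-nontriv} collapse: $X\soc(L)=L$ and $\mu(X\soc(L),L)=\mu(L,L)=1$, so the required inequality reduces to
$$|\mu(X,L)|\le |L:X|^a \quad \text{for every } X\le L.$$
This is precisely Theorem \ref{valentina}(2) applied to the alternating group $L$, with $a=\beta$.

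Feeding $d=c+2$ and $a=\beta$ into Theorem \ref{andrea-nontriv} yields
$$|\mu(H,G_k)|\le |G_k:H|^{\tilde a}, \qquad \tilde a=\max(\beta+1,c+3).$$
Since by definition $c=\alpha+\beta+11$, we have $c+3=\alpha+\beta+14>\beta+1$, so $\tilde a=c+3$, which is exactly the claimed bound. No step is technically delicate: the one place where a careful check is needed is the identification of the monolithic non-abelian-socle quotients as single alternating factors, and this is forced by the fact that $G_k$ is a direct product of non-abelian simple groups, hence semisimple as a $G_k$-group.
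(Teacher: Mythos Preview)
Your proposal is correct and follows essentially the same route as the paper: both invoke Proposition~\ref{prop:d} to get $d=c+2$, identify every monolithic non-abelian-socle quotient of $G_k$ as a single alternating factor $A_{m_i}=\soc(L)$, reduce the hypothesis of Theorem~\ref{andrea-nontriv} to Theorem~\ref{valentina}(2) with $a=\beta$, and conclude $\tilde a=\max(\beta+1,c+3)=c+3$. Your write-up merely spells out the direct-product argument for the identification of $L$ in slightly more detail than the paper does.
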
 
\begin{proof} By Proposition \ref{prop:d} we know that $G_k$ is $(c+2)$-generated. 
Each monolithic image $L$ of $G_k$ is isomorphic to $A_{m_i}$, for some $i$, and $L=\soc (L)$. By   Theorem \ref{valentina},  for any 
 $X \le  L$ 
   $$| \mu(X,L)| \le |L:X|^{\beta}  = |\mu(X\soc(L), L)| \cdot |X\soc(L):X|^{\beta}.$$
  We can thus apply Theorem  \ref{andrea-nontriv} to deduce that for each open subgroup $H$ of $G_k$ 
    $$| \mu(H,G_k)| \le  |G_k: H|^{\tilde{a}},$$
 where 
$\tilde{a}=\max(\beta+1, (c+2)+1)=c+3$, as $c+2 \ge \beta$. 
\end{proof}

\begin{lemma}\label{lem:zero}
Let $n > 5$ be an integer and let  $n_1 \dots n_t=n$ be a  nontrivial  (positive) factorization  of $n$, that is $t \neq 1$ and $n_i > 1$ for every $i$. 
 Then 
\[  \prod_{i=1}^t n_i! \le \frac{n!}{n^2}.\] 
\end{lemma}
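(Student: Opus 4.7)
My plan is to prove the inequality by induction on the number of factors $t$, reducing to the base case $t = 2$.

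For the inductive step, assume the bound for $(t-1)$-factorizations and take $t \ge 3$. Group the first two factors and consider the $(t-1)$-factorization $n = (n_1 n_2) \cdot n_3 \cdots n_t$, still nontrivial with every part $\ge 2$. The inductive hypothesis yields
\[(n_1 n_2)! \, n_3! \cdots n_t! \le \frac{n!}{n^2},\]
so it suffices to show $n_1! \, n_2! \le (n_1 n_2)!$. This follows from the well-known inequality $n_1!\, n_2! \le (n_1 + n_2)!$ (the binomial coefficient $\binom{n_1 + n_2}{n_1}$ being a positive integer) combined with $n_1 + n_2 \le n_1 n_2$, which is immediate from $(n_1 - 1)(n_2 - 1) \ge 1$.

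The crux is the base case $t = 2$. Write $n = ab$ with $2 \le a \le b$ and $n \ge 6$, and aim for $(ab)!/(a!\, b!) \ge (ab)^2$. I would split on $a$. If $a = 2$ (so $b \ge 3$), then $(2b)!/(2!\, b!) = \tfrac12 (b+1)(b+2)\cdots (2b)$ is a product of $b$ consecutive integers; the subproduct of the three explicit factors $(b+1)(b+2)(2b) = 2b(b^2 + 3b + 2)$ already exceeds $8b^2$ (the inequality $b^2 - b + 2 \ge 0$ being trivial), and the remaining factors are $\ge 1$, giving $(2b)!/(2!\, b!) \ge 4b^2 = (2b)^2$. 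If instead $a \ge 3$ (so $ab \ge 9$), the unimodality of binomial coefficients together with $3 \le a \le ab/2$ gives $\binom{ab}{a} \ge \binom{ab}{3} = ab(ab-1)(ab-2)/6 \ge (ab)^2$, where the last step reduces to $(ab)^2 - 9\, ab + 2 \ge 0$, valid for every integer $ab \ge 9$. Since $ab \ge a + b$ implies $(ab - a)! \ge b!$, one has $(ab)!/(a!\, b!) = \binom{ab}{a}(ab - a)!/b! \ge \binom{ab}{a}$, and the conclusion follows.

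The one delicate point is the tightness of the hypothesis $n > 5$: the excluded factorization $n = 2 \cdot 2$ gives $2!\,2! = 4 > 3/2 = 4!/4^2$, so the bound really fails at $n=4$, while at the smallest admissible value $n=6$ the lemma reads $60 \ge 36$. Beyond this boundary check, the case analysis of the base case $t=2$ is routine arithmetic and I do not expect any serious obstacle.
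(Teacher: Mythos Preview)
Your argument is correct. The induction on $t$ cleanly reduces to the two-factor case, and your case split there is sound: for $a=2$ the three factors $b+1,\,b+2,\,2b$ are distinct once $b\ge 3$, and the inequality $b^2-b+2\ge 0$ is trivial; for $a\ge 3$ the unimodality step $\binom{ab}{a}\ge\binom{ab}{3}$ uses only $3\le a\le ab/2$, and $m^2-9m+2\ge 0$ holds for every integer $m\ge 9$. The auxiliary estimate $(ab-a)!\ge b!$ is exactly $a(b-1)\ge b$, which follows from $(a-1)(b-1)\ge 1$.

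The paper itself states this lemma without proof, presumably regarding it as an elementary combinatorial fact, so there is no approach to compare yours against. Your write-up would serve perfectly well as the missing justification; if anything, you could shorten the $a=2$ case by simply noting that $\binom{2b}{b}\ge \binom{2b}{3}\ge (2b)^2$ already for $b\ge 5$ and checking $b=3,4$ directly, but what you have is fine.
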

%
 
 \begin{lemma}\label{lem:MS}\cite[Lemma 8]{MS}
 For every $\epsilon > 0$ there exists an integer $N = N(\epsilon)$ such that, if $\soc(G)$ is alternating
and $n > N(\epsilon),$ then $G$ has less than $n^{1+\epsilon}$ maximal subgroups of index $n$. 
   \end{lemma}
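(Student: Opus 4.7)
The plan is to reduce to the almost simple case and invoke the O'Nan--Scott classification of maximal subgroups of the symmetric group. First, I would observe that since $\soc(G) = A_m$ is a nonabelian simple normal subgroup of $G$, we have $A_m \le G \le \aut(A_m)$, and hence for $m \neq 6$, $G \in \{A_m, S_m\}$; the finitely many exceptional almost simple groups coming from $m = 6$ can be absorbed into $N(\epsilon)$. It then suffices to bound, uniformly in $m$, the number of maximal subgroups of $S_m$ (or $A_m$) of index exactly $n$. By O'Nan--Scott such a subgroup is intransitive (conjugate to $S_k \times S_{m-k}$ for some $1 \le k < m/2$), imprimitive (conjugate to $S_k \wr S_l$ with $kl = m$ and $k, l \ge 2$), or primitive and not containing $A_m$, and I would bound each family in turn.

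For the intransitive family, I would use that the conjugacy class of $S_k \times S_{m-k}$ has size $\binom{m}{k}$, which coincides with its index; since $\binom{m}{k}$ is strictly increasing in $k$ on $[0, m/2]$, at most one value of $k$ yields any prescribed $n$, contributing at most $n$ maximal subgroups of index $n$. For the imprimitive family, each conjugacy class again has size equal to the index of a representative, and the number of admissible factorizations $m = kl$ is bounded by the divisor function $\sigma_0(m)$, giving at most $\sigma_0(m)\cdot n$ such subgroups. Since $n \le m!$ forces $m = O(\log n / \log\log n)$, one concludes $\sigma_0(m) = n^{o(1)}$.

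For primitive maximal subgroups I would invoke the Praeger--Saxl theorem (sharpened by Mar\'oti) giving $|H| < 4^m$ whenever $H \le S_m$ is primitive and does not contain $A_m$; hence any such $H$ has index at least $m!/4^m$. Each such $H$ is self-normalising in $S_m$ (else $H$ would be normal, contradicting maximality together with $A_m \not\le H$), so its $S_m$-conjugacy class has size equal to its index $n$. Combined with the classical polynomial-type bound $m^{c \log m}$ on the number of conjugacy classes of primitive subgroups of $S_m$, this would give at most $m^{c \log m}\cdot n$ primitive maximals of index $n$. The constraint $n \ge m!/4^m \ge 2^m$ (for large $m$) then forces $m \le \log_2 n$, so $m^{c \log m} \le (\log n)^{c \log \log n} = n^{o(1)}$.

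Summing the three contributions yields at most $n + \sigma_0(m)\, n + n^{1+o(1)} = n^{1+o(1)}$ maximal subgroups of $G$ of index $n$, which is less than $n^{1+\epsilon}$ once $n$ exceeds a threshold $N(\epsilon)$ depending only on $\epsilon$. The delicate input is the quantitative count of primitive subgroups; the intransitive and imprimitive contributions reduce to standard conjugacy-class size computations, and the self-normalising property for primitive maximals is where I would expect to have to argue most carefully.
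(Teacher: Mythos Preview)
The paper does not supply its own proof of this lemma: it is quoted as \cite[Lemma~8]{MS} and used as a black box. Your outline is essentially the standard argument (and close to what Mann and Shalev actually do): reduce to $G\in\{A_m,S_m\}$, split the maximal subgroups via O'Nan--Scott into the intransitive, imprimitive, and primitive families, and show that each family contributes at most $n^{o(1)}$ conjugacy classes of index $n$, each class of size at most $n$.

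One step is miswritten. In the imprimitive case you assert that ``$n\le m!$ forces $m=O(\log n/\log\log n)$''; this implication goes the wrong way (from $n\le m!$ one obtains only a \emph{lower} bound on $m$). What you actually need is an upper bound on $m$ in terms of $n$, and the trivial one already suffices: every proper subgroup of $A_m$ or $S_m$ has index at least $m$ for $m\ge 5$, so $m\le n$, and then the standard divisor bound $\sigma_0(m)=m^{o(1)}$ yields $\sigma_0(m)=n^{o(1)}$. With this correction the remainder of your argument is sound; the primitive case, combining the Praeger--Saxl/Mar\'oti order bound $|H|<4^m$ with the $m^{O(\log m)}$ count of conjugacy classes of primitive subgroups of $S_m$, is indeed the substantive input, and your treatment there is correct.
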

   
 \begin{lemma}\label{lem:max}
 There exists an integer $N \ge 5$ 
  such that if $m_1 \ge N$,  then  for every integer $n $ there 
 are at most $$n^3 (n!)^c$$
 maximal subgroups of index $n$ in $G_k. $
 \end{lemma}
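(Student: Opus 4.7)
The plan is to decompose the count by using the structure of $G_k$ as a direct product of the factors $H_i := A_{m_i}^{f(m_i)}$, and then apply the classical description of maximal subgroups of a power of a non-abelian finite simple group. Since the $m_i$ are distinct integers $\ge 5$, the simple groups $A_{m_i}$ are pairwise non-isomorphic non-abelian simple, so the factors $H_i$ share no composition factor; consequently every maximal subgroup of $G_k$ has the form $M \times \prod_{j\neq i} H_j$ for some $i$ and some maximal subgroup $M$ of $H_i$, and the index in $G_k$ equals the index of $M$ in $H_i$. Hence it suffices to bound, for each $i$ and each integer $n$, the number of maximal subgroups of $H_i$ of index $n$.

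For $T = A_{m_i}$ and $f = f(m_i)$, I would use the well-known dichotomy: the maximal subgroups of $T^f$ are either \emph{product type}, namely $\pi_t^{-1}(M_0)$ with $\pi_t\colon T^f \to T$ a coordinate projection and $M_0$ maximal in $T$ (of index $|T:M_0|$, and $f$ copies for each $M_0$); or \emph{diagonal type}, of the form $\{(t_1,\dots,t_f) : t_b = \sigma(t_a)\}$ for a pair $a<b$ and $\sigma \in \aut(T)$ (of index $|T|$, and $\binom{f}{2}|\aut(T)|$ in total).

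For the product-type contribution the key observation is that the smallest index of a maximal subgroup of $A_{m_i}$ is $m_i$ (attained by $A_{m_i-1}$), so whenever $A_{m_i}$ has a maximal of index $n$ one has $m_i \le n$ and therefore $f(m_i) \le (m_i!)^c \le (n!)^c$. Applying Lemma~\ref{lem:MS} with $\epsilon = 1/2$, and taking $N \ge N(1/2)+1$, each such $A_{m_i}$ has at most $n^{3/2}$ maximal subgroups of index $n$; and at most $n$ values of $i$ satisfy $m_i \le n$ since the $m_i$ are distinct. This gives a total of at most $n \cdot (n!)^c \cdot n^{3/2} = n^{5/2}(n!)^c$ product-type maximals of index $n$ in $G_k$.

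For the diagonal type the index is $|A_{m_i}|=m_i!/2$, strictly increasing in $i$, so at most one $i$ contributes to a given $n$; and for that $i$ the count is at most $f(m_i)^2|\aut(A_{m_i})| \le (m_i!)^{2c}\cdot 2 m_i! = 2(2n)^{2c+1}$. This is the step I expect to be the main obstacle: the quadratic factor $f(m_i)^2$ can be as large as $(m_i!)^{2c}$, well beyond what the target bound $n^3(n!)^c$ would permit if $n$ were only moderately large. The rescue is that a diagonal-type maximal forces $n \ge m_1!/2 \ge N!/2$, which is super-exponential in $N$, so choosing $N$ large (depending only on $c$) makes $2(2n)^{2c+1}\le \tfrac12 n^3(n!)^c$. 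Adding this to the product-type bound (which is $\le \tfrac12 n^3(n!)^c$ for $n \ge 4$), and noting that for $n < m_1$ there are no maximal subgroups of index $n$ in $G_k$ at all, yields the stated bound $n^3(n!)^c$.
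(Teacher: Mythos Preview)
Your proof is correct and follows essentially the same approach as the paper: both use the dichotomy (product-type versus diagonal-type) for maximal subgroups of a direct product of copies of a non-abelian simple group, bound the product-type contribution via Lemma~\ref{lem:MS} together with $f(m_i)\le (n!)^c$ and $m_i\le n$, and bound the diagonal-type contribution by $O((2n)^{2c+1})$ using $n=|A_{m_i}|=m_i!/2$. The only cosmetic difference is that the paper verifies directly that $(2n)^{2c+1}\le (n!)^c/n$ already for $n\ge 5!/2=60$ (so no extra enlargement of $N$ is needed beyond $N>N(1/2)$ and the generation condition $N!/\log(N!/2)\ge 2^{c+1}$ used later), whereas you invoke $n\ge N!/2$ and enlarge $N$; both are valid.
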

 \begin{proof} 
 By Lemma \ref{lem:MS}, applied with $\epsilon =1/2$, there exists an integer $N(1/2)$ such that if $n \ge N(1/2)$, then an alternating group has    less than $n^{1+1/2}$ maximal subgroups of index $n$. 
 We set $N$ to be the minimal integer greater than $N(1/2)$ and satisfying the condition $N!/\log(N/2) \ge 2^{c+1}.$
 Note that for $m_1 \ge N$, 
  the index of any subgroup of $G_k$ is always at least $N$. 
 
 Let $M$ be a maximal subgroup of index $n$ in $G_k$. Since $G_k$ is a direct product of nonabelian finite simple groups  (namely $A_{m}$, for some $m \ge m_1$), 
 then  one of the following holds  (see e.g. \cite[Lemma 16]{PFG}): 
\begin{itemize} 
\item[1)] $M=T_i \times K,$ 
where $T_i$ is the product of all minimal normal subgroups of $G_k$ but one,  isomorphic to some $A_m$, and $K$ is a maximal subgroup of $A_m$, 
\item[2)] $M=T_{i,j} \times D,$
 where $T_{i,j}$ is the  product of all minimal normal subgroups of $G_k$ but two  isomorphic to $A_m$ and $D$ is a \lq\lq diagonal subgroup" of $A_m \times A_m$, i.e.   
  there exists  and an automorphism $\phi$ of $A_m$ such that $D = \{(x,x^\phi) \mid x \in A_m\}$. 
  \end{itemize}

  In  case (1), we have at most $f(m)$ choices for $A_m$. 
Moreover,  by the definition of $N$,  
   there are less then   
    $n^{1+1/2}$ choices for the maximal subgroup $K$ of index $n$ in $A_m$. Thus we have at most 
  $$ \sum_{m \le n}  n^{1+1/2}f(m) \le \sum_{m \le n}  n^{1+1/2} (m!)^c \le n^{2+1/2} (n!)^c$$
  maximal subgroups $M$ of the first type, since $m \le n$. 
  
  In case (2), we notice  that $n=|G_k:M|=|A_m|=m!/2$, hence $m!=2n$.  We have at most $f(m)(f(m)-1)/2$ choices for $T_{i,j}$, and once $T_{i,j}$ is fixed, we have $|\aut(A_m)| \le 2m!=4n$ choices for the automorphism $\phi$.  Since $f(m) \le (m!)^c$, we have at most 
  \[ \frac{f(m)(f(m)-1)}{2} \cdot 2m! \le (2n)^c ((2n)^c-1) 2n \le (2n)^{2c+1}\] 
   maximal subgroups $M$ of this type. 
 Actually, since $ m \ge 5$, we have this type of maximal subgroups only for $n \ge 5!/2$, 
 and thus we have at most  
 $$(2n)^{2c+1} \le \frac{ (n!)^c}{n}$$ 
 of these subgroups. 
 Adding the two bounds,  we conclude that there are  at most 
 \[ n^{2+1/2} (n!)^c + \frac{ (n!)^c}{n} \le n^3 (n!)^c \] 
  maximal subgroups of index $n$ in $G_k. $
  \end{proof}

 \begin{prop}\label{prop:a_n}
 There exists an integer $N \ge 5$ such that if $m_1 \ge N$,    then  
 $$|a_n(G_k) | \le n^{c}(n!)^c$$
 for every integer $n$. 
 In particular, if $n > m_k$, then 
  $$|a_n(G_k) | \le \frac{3}{n} (n!)^c.$$
 \end{prop}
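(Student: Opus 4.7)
The approach is to reduce the estimate on $a_n(G_k)$ to estimates on each block $A_m^f$ via two successive factorisations. Since the $m_i$ are pairwise distinct, the blocks of $G_k$ share no composition factor, so Brown's formula \eqref{eq:brown} yields
\[
a_n(G_k)=\sum_{n_1\cdots n_k=n}\prod_{i=1}^{k}a_{n_i}\bigl(A_{m_i}^{f(m_i)}\bigr).
\]
Inside each block I would apply Boston's formula \eqref{eq:boston}, $P_{A_m^f}(s)=\prod_{j=0}^{f-1}Q_j(s)$ with $Q_j(s)=P_{A_m}(s)-j|\aut A_m|/|A_m|^s$. Theorem \ref{valentina} bounds the coefficients of $Q_j$: $|[Q_j]_e|\le e^{\alpha+\beta}\le e^c$ for $1<e<|A_m|$, while $|[Q_j]_{|A_m|}|\le (m!/2)^{\alpha+\beta}+j\cdot|\aut A_m|\le 2(m!)^{c+1}$ using $f\le(m!)^c$ and $|\aut A_m|\le 2m!$. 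Expanding the Boston product, partitioning contributions by the set of indices $j$ at which a nontrivial factor is chosen, and absorbing the combinatorial cost of those choices by Lemma \ref{lem:zero}, one reaches an estimate
\[
|a_d(A_m^f)|\le (d!)^c\cdot p(d)
\]
for some polynomial $p$, uniformly in $m\ge m_1$ provided $m_1$ is large enough.

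Plugging this back into the Brown expansion, using $\prod_i (n_i!)^c\le (n!)^c$ together with the polynomial bound on the number of ordered factorisations of $n$, delivers the first assertion $|a_n(G_k)|\le n^c(n!)^c$. For the refined bound $|a_n(G_k)|\le 3(n!)^c/n$ when $n>m_k$, I would split the sum over $n_1\cdots n_k=n$ into \emph{nontrivial} factorisations (at least two factors exceed $1$) and \emph{trivial} ones (exactly one $n_i$ equals $n$). In the nontrivial case, Lemma \ref{lem:zero} gives $\prod n_i!\le n!/n^2$, so the contribution is at most $(n!)^c/n^{2c}$ times a polynomial, comfortably $\le (n!)^c/n$ for $c$ large. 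In the trivial case, the condition $n>m_k\ge m_i$ forces $(m_i!)^c\le ((n-1)!)^c=(n!)^c/n^c$, and this extra factor $1/n^c$ (coming from $f(m_i)\le(m_i!)^c$ in the Boston expansion of $|a_n(A_{m_i}^{f(m_i)})|$) combined with $c=\alpha+\beta+11$ bounds each trivial summand by $(n!)^c/n^{11}$; summing over the at most $k\le m_k<n$ possible $i$ yields at most $(n!)^c/n^{10}$, and combining with the nontrivial part produces the claimed bound $3(n!)^c/n$.

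The main obstacle is the bookkeeping in the first step: the Boston product has $f$ factors, each with potentially large coefficients, and one must verify that the sum over subsets of $\{0,\dots,f-1\}$ is fully absorbed by the factorial growth built into $(d!)^c$. The reserve exponent $c-\alpha-\beta=11$ is precisely what allows polynomial factors in $d$ and the (logarithmically many) nontrivial Boston factors to be swallowed, via Lemma \ref{lem:zero} applied \emph{inside} each block. Once the uniform estimate on $|a_d(A_m^f)|$ is in hand, the remainder of the proof is a routine separation into trivial and nontrivial factorisations combined with a second application of Lemma \ref{lem:zero} at the outer Brown level.
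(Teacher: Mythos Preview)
Your approach is genuinely different from the paper's. The paper does \emph{not} pass through the Brown/Boston factorisation at all; it works directly with the subgroups of $G_k$, invoking \cite[Theorem~1]{nontrivialM}: every $H\le G_k$ with $\mu(H,G_k)\neq0$ is an intersection $Y_1\cap\cdots\cap Y_t$ where each $Y_i$ is either maximal in $G_k$ or contains a normal $K_i$ with $G_k/K_i$ simple. The set $\Omega$ of such $H$ of index $n$ is then split into three pieces: $\Omega_2$ (those $H$ lying over some simple quotient $A_m$, bounded via Theorem~\ref{valentina} and $f(m)\le(m!)^c$), $\Omega_3$ (diagonal-type maximal subgroups, already counted in Lemma~\ref{lem:max}), and $\Omega_1$ (the rest, which forces a nontrivial factorisation $n=n_1\cdots n_t$ and hence allows Lemma~\ref{lem:zero}). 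The M\"obius values on $\Omega_1$ are controlled by Corollary~\ref{cor:mu}. The refined bound for $n>m_k$ then falls out because only $\Omega_2$ carries the factor $(\min(n,m_k)!)^c$, which drops to $((n-1)!)^c=(n!)^c/n^c$.

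Your route can be made to work, and the key mechanism you identify---$C(f,t)\le f^t\le (m!)^{ct}\le\bigl(\prod_{j\in S}e_j!\bigr)^c\le(d!)^c$ because every nontrivial $e_j\ge m$---is exactly what absorbs the subset count. However, the proposal does not close the loop on the diagonal Boston term. You correctly bound $|[Q_j]_{|A_m|}|\le 2(m!)^{c+1}$, which is far larger than the $e_j^{\alpha+\beta}$ bound used for the other coefficients, but you never explain how this extra factor is absorbed \emph{on top of} the $C(f,t)$ term that has already consumed the full $(d!)^c$ budget. It \emph{can} be absorbed (because for $e_j=|A_m|=m!/2$ one has $e_j!=(m!/2)!$, which dwarfs $(m!)^{c+1}$, so there is a vast unused reserve in $\prod(e_j!)^c$), but this step is missing and is precisely the ``bookkeeping'' you flag as the main obstacle without resolving. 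The paper's trichotomy $\Omega_1,\Omega_2,\Omega_3$ is less conceptually tidy than your product decomposition, but it sidesteps this issue entirely by treating diagonal maximals as a separate, directly countable class rather than as a coefficient in a Dirichlet product.
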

 \begin{proof}
 Let $N$ be the integer defined in Lemma \ref{lem:max} and let  $\Omega$  be the set of   subgroups $H$  of index $n$ in $G_k$ such that $\mu(H,G_k) \neq 0$. 
 If $H \in \Omega$, then,   
 by \cite[Theorem 1]{nontrivialM},  there exist a factorization $n=n_1 \cdots n_t$ (with $n_i>1$) and a family of subgroups $Y_1, \dots , Y_t$   of $G_k$ 
 satisfying the following properties: 
 \begin{enumerate}
 \item $H= Y_1 \cap \dots \cap Y_t$;
 \item $|G_k:Y_i|=n_i$; 
 \item $\mu(Y_i,G_k) \neq 0$ for every $i$; 
 \item either $Y_i$ is a maximal subgroup of $G_k$ or there exists a normal subgroup $K_i$ of $G_k$ such that  $K_i \le Y_i$ and $G_k/K_i$ is simple.
 \end{enumerate} 
 Let $\Omega_1$ be the set of subgroups $H \in \Omega $ such that $t\neq 1$, 
  let $\Omega_2$ be  the set of subgroups $H \in \Omega$ 
   such that  there exists a normal subgroup $K$ of $G_k$ such that  $K\le H$ and $G/K$ is simple,  and let $\Omega_3 = \Omega \setminus (\Omega_1 \cup \Omega_2)$. 
 
 Note that if $H \in \Omega_2$, then there exists a normal subgroup $K$ of $G_k$ such that  $K\le H$ and $G/K \cong A_m$ for an integer $m \le \min(n,m_k)$. 
For each $m \le \min(n,m_k)$, we have at most $f(m)$ choices for $K$ and, once $K$ is fixed, by  Theorem \ref{valentina}, we have at most $b_n(A_m) \le n^{\alpha}$ choices for $H/K$. Moreover, $\mu(H,G_k) = \mu (H/K, G_k/K) \le n^{\beta}$. 
Thus 
\begin{eqnarray}\label{2} 
\left|\sum_{H  \in \Omega_2} \mu(H,G_k) \right| \le \sum_{m \le n} \left(f(m) n^{\alpha} n^{\beta}\right) \le n^{\alpha+\beta+1} (\min(n,m_k)!)^c.
 \end{eqnarray} 
 
 If  $H \in \Omega_3$, then $H$ is a maximal subgroup  of $G_k$, hence 
 $\mu(H,G_k)=-1$. In the proof of Lemma \ref{lem:max} we have seen that $G_k$ has at most $(n!)^c/n$ of this kind of maximal subgroups, 
 thus 
 \begin{eqnarray}\label{3} 
\left|\sum_{H  \in \Omega_3} \mu(H,G_k) \right| \le \frac{(n!)^c}{n}.
 \end{eqnarray} 
 
 Let now $H \in \Omega_1$. Then  there exists a nontrivial factorization $n=n_1 \cdots n_t$ and a family of subgroups  $Y_1, \dots , Y_t$  of $G_k$ with $t \neq 1$ 
 satisfying the above properties.  
 There are at most $n^2$ possible choices for the factorization $n=n_1 \cdots n_t$  (see \cite{partitions}): fix one of them. By Lemma \ref{lem:max} we have at most $n_i^3 (n_i!)^c$ choices of $Y_i$ if $Y_i$ is maximal. If $Y_i$ is not  maximal, then  $Y_i/K_i$ is a subgroup of $G_k/K_i \cong A_{m}$, for some $m\le n_i$, with nontrivial  value of the 
  M\"obius   function. 
 For each $m \le n_i$, we have at most $f(m) \le (m!)^c$ choices for $K_i$ and then, by  Theorem \ref{valentina}, at most $b(n_i) \le n_i^{\alpha}$ choices for $Y_i/K_i$. 
  Therefore we have at most 
  \[  n_i^3 (n_i!)^c + \sum_{m=m_1}^{n_i} n_i^{\alpha} f(m) \le n_i^3 (n_i!)^c + n_i \cdot n_i^{\alpha} (n_i!)^c \le n_i^{\alpha+5} (n_i!)^c\]
 choices for  $Y_i$. 
 Thus, applying  Lemma \ref{lem:zero} we get  
  \begin{eqnarray*} 
 | \Omega_1| 
  & \le& \sum_{\substack{n_1 \cdots n_t=n\\ t\neq 1}} \left( \prod_{i=1}^t n_i^{\alpha+5} (n_i!)^c \right)\\
  & \le& n^2 n^{\alpha+5} \left( \frac{n!}{n^2} \right)^c
  = n^{\alpha+7-2c}(n!)^c.
 \end{eqnarray*}  
By Corollary \ref{cor:mu},   $ \mu(H, G_k) \le n^{c+3}$ for every subgroup $H$ of $G_k$, hence 
 \begin{eqnarray}\label{1} 
  \left|\sum_{H \in \Omega_1 } \mu(H, G_k) \right| \le   | \Omega_1| n^{c+3}
 \le  n^{\alpha+10- c} (n!)^c.  
 \end{eqnarray}  
 
 Combining (\ref{2}), (\ref{3}) and  (\ref{1}) and taking into account that $c \ge \alpha+\beta+ 11$, we deduce that 
   \begin{eqnarray*} 
    |a_n(G_k)| &\le&  n^{\alpha+\beta+1} (\min(n,m_k)!)^c + n^{\alpha+10- c} (n!)^c +\frac{(n!)^c}{n}  \\
& \le&  n^{c} (n!)^c . 
  \end{eqnarray*} 
 
 Whenever  $n> m_k$, a sharper bound holds: indeed $  \min(n,m_k)! = m_k!\le (n-1)! =n!/n$ hence from 
  (\ref{2}), (\ref{3}) and  (\ref{1})  we conclude that 
   \begin{eqnarray*} 
  |a_n(G_k)|  
  &\le&   n^{\alpha+\beta+1} \left(\frac{n!}{n}\right)^c + n^{\alpha+10- c} (n!)^c +\frac{(n!)^c}{n}\\
&  = &  n^{\alpha+\beta+1 -c} (n!)^c + n^{\alpha+10- c} (n!)^c +\frac{(n!)^c}{n}
 \\ & \le&   \frac{3}{n}(n!)^c 
  \end{eqnarray*} 
  as claimed. 
 \end{proof}

\begin{prop}\label{prop:c_n}
Under the assumption of Proposition \ref{prop:a_n} and for any $n >m_k$ 
$$|c_n(G_k)| \le (n!)^c.$$
 \end{prop}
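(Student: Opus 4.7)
The plan is to bound $|c_n(G_k)|$ directly from Lemma \ref{c-base}(2), splitting the sum over ordered factorizations $n = n_1 \cdots n_t$ (with each $n_i > 1$) according to whether $t = 1$ or $t \ge 2$, and applying the two distinct bounds on $|a_n(G_k)|$ provided by Proposition \ref{prop:a_n} together with the factorial estimate of Lemma \ref{lem:zero}.

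First I would handle the single-factor term $t = 1$: since $n > m_k$, Proposition \ref{prop:a_n} gives the sharper bound $|a_n(G_k)| \le \tfrac{3}{n}(n!)^c$, which is $\le \tfrac{1}{2}(n!)^c$ because $n > m_k \ge m_1 \ge N \ge 6$.

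Next I would handle the terms with $t \ge 2$. Here I can no longer guarantee $n_i > m_k$ for all factors, so I use only the general bound $|a_{n_i}(G_k)| \le n_i^c (n_i!)^c$ from Proposition \ref{prop:a_n}. For each such factorization,
\[
\prod_{i=1}^t |a_{n_i}(G_k)| \;\le\; \Bigl(\prod_i n_i\Bigr)^{c} \Bigl(\prod_i n_i!\Bigr)^c \;=\; n^c \Bigl(\prod_i n_i!\Bigr)^c.
\]
Lemma \ref{lem:zero} (which applies since $n > 5$ and the factorization is nontrivial) gives $\prod_i n_i! \le n!/n^2$, so the product is $\le n^c \cdot (n!)^c/n^{2c} = (n!)^c/n^c$. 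Since the number of ordered factorizations of $n$ is at most $n^2$ (used already in the proof of Proposition \ref{prop:a_n}), the total contribution of all $t \ge 2$ terms is at most $n^{2-c}(n!)^c$.

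Combining the two contributions via Lemma \ref{c-base}(2),
\[
|c_n(G_k)| \;\le\; \frac{3}{n}(n!)^c + n^{2-c}(n!)^c \;=\; \Bigl(\frac{3}{n} + n^{2-c}\Bigr)(n!)^c.
\]
Since $n \ge 6$ and $c \ge \alpha + \beta + 11 \ge 11$, we have $3/n \le 1/2$ and $n^{2-c} \le 6^{-9}$, so the bracket is less than $1$, yielding $|c_n(G_k)| \le (n!)^c$ as required. There is no real obstacle here: the argument is a clean two-case split, with the only arithmetic point being that the constant $c$ was chosen large enough (the $11$ in $c = \alpha+\beta+11$) precisely so that the $t \ge 2$ contribution $n^{2-c}(n!)^c$ is negligible compared to $(n!)^c$.
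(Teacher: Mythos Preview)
Your proof is correct and follows essentially the same route as the paper's: split the sum from Lemma~\ref{c-base}(2) into the $t=1$ term (handled by the sharp bound $|a_n(G_k)|\le \tfrac{3}{n}(n!)^c$ for $n>m_k$) and the $t\ge 2$ terms (handled by the general bound $|a_{n_i}(G_k)|\le n_i^c(n_i!)^c$, Lemma~\ref{lem:zero}, and the $n^2$ bound on the number of ordered factorizations). The only cosmetic difference is in the final arithmetic --- the paper simply invokes $c\ge 3$ to conclude, rather than your more explicit estimate $3/n + n^{2-c} < 1$.
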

 \begin{proof}
 By Lemma \ref{c-base} 
 $$ |c_n(G_k)| \le \sum_{n_1 \cdots n_t=n} |a_{n_1}(G_k) \dots a_{n_t}(G_k)|,$$
 where the sum runs over the set of all ordered factorizations  $n_1 \dots n_t=n$   with  $n_i > 1$. 
 Notice that number $H(n)$ of ways to factor a natural number $n$ into an ordered product of integers,
 each factor greater than one, is at most $n^{\zeta^{-1}(2)}\leq n^2$ \cite{orfa}. 
 As $n > m_k$, it follows from Proposition \ref{prop:a_n} that  $ |a_n(G_k)| \le  \frac{3}{n}(n!)^c$, while the bound 
   $|a_{n_i}(G_k) | \le n_i^{c} (n_i!)^c $ suffices for  any nontrivial factorization $n_1 \cdots n_t=n$ of $n$.  
Indeed, applying Lemma \ref{lem:zero}, we get 
 \begin{eqnarray*} 
 |c_n(G_k)| 
  &\le&  \sum_{\substack{n_1 \cdots n_t=n\\ t\neq 1}} \left( \prod_{i=1}^t  |a_{n_i}(G_k)| \right) + |a_n (G_k)|\\
 &\le&  \sum_{\substack{n_1 \cdots n_t=n\\ t\neq 1}}  \left(  \prod_{i=1}^t   n_i^{c} (n_i!)^c \right) +  \frac{3}{n}(n!)^c\\
 &\le & H(n) \cdot n^{c} \left(\frac{n!}{n^{2}}\right)^c+ \frac{3}{n}(n!)^c\\
  &\le &   (n!)^c, 
  \end{eqnarray*}  
  where the last inequality follows from the fact that $ c \ge 3$.
 \end{proof}

Now we are ready to prove Theorem  \ref{th:main} by constructing   a non-prosoluble  finitely generated group $G$ such that  every coefficient of $(P_G(s))^{-1}$ is nonnegative.
 \begin{proof}[Proof of Theorem \ref{th:main}]
 Let $c = \alpha+\beta+11$, where $\alpha$ and $\beta$ are the constants defined in Theorem \ref{valentina}, and let $N$ be the constant defined in 
  Proposition \ref{prop:a_n}.
 
 We set $m_1=N$ and we note that, by the definition of $N$,  $m_1/\log(m_1/2) \ge 2^{c+1}.$
 Then we set $f_1=1$ 
  and $G_1=A_{m_1}$. 
The probabilistic zeta function of $G_1=A_{m_1}$ has some negative coefficients (see Section \ref{simple}):  
 let $m_2$ the first integer such that $c_{m_2}(G_1)<0$  and set 
 $$f_2= - \frac{c_{m_2}(G_1)}{m_2} \quad \textrm{and}  \quad G_2=  A_{m_1} \times A_{m_2}^{f_2}. $$ 
Note that, by Proposition \ref{prop:c_n},  $f_2 \le (m_2!)^c$ and,  by Lemma \ref{n-divide-c_n},  $m_{2}$ divides $c_{m_2}(G_1)$, hence $f_2$ is actually  an integer.   

Assume that we have defined two sequences of integers $m_1< m_2 < \dots < m_{k}$ and $f_1, f_2, \dots , f_k, $ 
 such that $f_i  \le (m_i !)^c$ and the coefficients of the probabilistic zeta function of $G_k = \prod_{i=1}^{k} A_{m_i}^{f_i}$  
 satisfy 
 $$ c_{n}(G_k) \ge 0, \quad \forall \ n \le m_k.$$ 
  If all coefficients $c_{n}(G_k)$ are nonnegative, then we set $G=G_k$, and we are finished.  
 Otherwise,  we define $m_{k+1}$ to be the first integer $n$ such that $c_{n}(G_k) <0 $. By Lemma \ref{n-divide-c_n},  $m_{k+1}$ divides $c_{m_{k+1}}(G_k)$. 
 So, we set 
 $f_{k+1}=   {-c_{m_{k+1}}(G_k)}/{m_{k+1}}$ and 
  $$ G_{k+1}=  G_k \times  A_{m_{k+1}}^{f_{k+1}}= \prod_{i=1}^{k+1} A_{m_i}^{f_i}. $$ 
 Note that, since $m_{k+1} > m_k$,  from  Proposition \ref{prop:c_n} (where $f$ is a function such that $f(m_i)=f_i$)  it follows that $f_{k+1} \le (m_{k+1}!)^c$. 
   
 By   equation \ref{eq:brown} 
 \begin{equation} 
 P_{G_{k+1}}(s) = P_{G_{k}}(s) \cdot  P_{A_{m_{k+1}}^{f_{k+1}}}(s). 
 \end{equation}
Moreover, by equation \ref{eq:boston}, we can evaluate the first nontrivial terms of $ P_{A_{m_{k+1}}^{f_{k+1}}}(s)$
  \begin{eqnarray*} 
   P_{A_{m_{k+1}}^{f_{k+1}}}(s) &=& \prod_{i=0}^{ f_{k+1} -1}    \left( P_{A_{m_{k+1}}}(s) -  \frac{i | \aut(A_{m_{k+1}})| }{|A_{m_{k+1}}|^s} \right)\\
   &=&  \prod_{i=0}^{ f_{k+1} -1}     \left( 1- \frac{{m_{k+1}}}{{m_{k+1}}^s} + \cdots \right) \\
      &=&  1- \frac{ f_{k+1}{m_{k+1}}}{{m_{k+1}}^s} + \cdots  , 
 \end{eqnarray*}
 hence 
\[  P_{A_{m_{k+1}}^{f_{k+1}}}(s) ^{-1} =1+  \frac{ f_{k+1}{m_{k+1}}}{{m_{k+1}}^s} + \cdots. \] 
  Since $ f_{k+1}{m_{k+1}}={-c_{m_{k+1}}(G_k)}$, from 
\begin{eqnarray*} 
 P_{G_{k+1}}(s)^{-1} &=& P_{G_{k}}(s)^{-1}  \cdot  P_{A_{m_{k+1}}^{f_{k+1}}}(s) ^{-1} \\
 &=& \left( \sum_{n} \frac{c_n(G_k)}{n^c} \right)   \cdot   \left( 1+  \frac{ f_{k+1}{m_{k+1}}}{{m_{k+1}}^s} + \cdots  \right) 
 \end{eqnarray*}
 we deduce that $c_{m_{k+1}}(G_{k+1})=0$ and so  the coefficients $c_{n}(G_{k+1})$ are nonnegative for every $n \le m_{k+1}$. 
   This shows that the chosen  integers $m_{k+1}$ and $f_{k+1}$ satisfies the above conditions. 
   
   Let $G$ be the inverse limit of the finite groups $G_k$.    By Proposition \ref{prop:d}, each $G_{k}$   is $(c+2)$-generated. 
   Therefore $G$ is  $(c+2)$-generated. 
   
   Let $P_G(s)^{-1} = \sum_n c_n(G)/n^s$. Note that $c_n(G) = c_n(G_k)$ whenever $n \le m_k$, 
   since $P_G(s)^{-1}= P_{G_k}^{-1} (s) P_H(s)^{-1}$ for $H= \prod_{j >k} A_{m_j}^{f_j}$ and $c_{n}(H)=0$ for every $0 \neq  n \le m_{k+1}$. 
   It follows that all the coefficients $c_n(G)$ of the probabilistic zeta function of $G$ are nonnegative. 
 \end{proof}

\section{Simple groups}\label{simple}  
We know no example of a nonabelian 
 finite simple group $G$ for which the probabilistic zeta function $(P_G(s))^{-1}$ has no negative coefficients. In the case of the small simple groups for which the table of marks is available, the existence of negative coefficients  in  $(P_G(s))^{-1}$ can be easily detected using GAP. 

In the case of alternating groups,  we may use the following result: if $G=A_n$, $n\geq 9$ and $|A_n:K|\leq n(n-1),$ then $K$ is either a point-stabilizer, or a 2-set stabilizer, or the intersection of two point-stabilizers (see for example \cite[Theorem 5.2A]{dm}). This implies 
$$P_G(s)=1-\frac{n}{n^s}-\frac{n(n-1)/2}{(n(n-1)/2)^s}+\frac{n(n-1)}{(n(n-1))^s}+\dots$$ and consequently 
 $$(P_G(s))^{-1}=1+\frac{n}{n^s}+\frac{n(n-1)/2}{(n(n-1)/2)^s}-\frac{n(n-1)}{(n(n-1))^s}+\dots$$
 has a negative coefficient for $n(n-1).$
  For $5 \le n \le 8$,  we can use GAP to show the existence of negative coefficients   in  $(P_G(s))^{-1}$. 
 
 When $G$ is a simple group of Lie type defined over a field of characteristic $p$, one can consider  the inverse of the series 
 $P_G^{(p)}(s)=\sum_{(n,p)=1}a_n(G)/n^s$, which can be easily described since it depends only on the parabolic subgroups of $G$ (see  \cite[Theorem 17]{pat}). For example, it is not difficult to see that if $G\neq PSL(2,q)$ is an untwisted group of Lie type, then  $(P_G^{(p)}(s))^{-1}$ has at least a negative coefficient. 
   Thus  also $(P_G(s))^{-1}$ has a negative coefficient. 
 The case $G=PSL(2,q)$ requires a more detailed case by case analysis, but the probabilistic zeta function in  known \cite{pat2} and it is possible to see again that there is a negative coefficient.
 
 \section{The Probabilistic Zeta Function of $C_2^2  \times C_5^2 \times A_5$.} 
 
 It seems quite challenging  to  construct a possible example of a finite non-soluble group whose probabilistic zeta function has no negative coefficients. 
 To describe some of the difficulties that arises,  in this section we  sketch the 
 calculation for the "easy" group $G=C_2^2  \times C_5^2 \times A_5$;  here the polynomials  involved in the factorization of $P_G(s)$  are all known and easy to handle. 
 We will see that the first negative coefficient of  $P_G^{-1}(s)$ occurs  for $n= 50000$.

 Recall that 
 \[P_{A_5}(s) =1-5/5^s -6/6^s-10/10^s+20/20^s +60/30^s-60/60^s.  \] 
   Thus, the  probabilistic zeta function $(P_{A_5}(s))^{-1}$ has a negative coefficient for $n=20$: 
       $c_{20}(A_5)=-20.$
 
  Let $G=  H \times A_5$ where  $H=C_2^2 \times C_5^2 $. 
 Note that 
 \[P_G (s)= P_{C_2^2}(s) \cdot P_{C_5^2}(s) \cdot P_{A_5}(s), \] 
 as the chief factors of $G$ in the sections 
  $C_2^2, C_5^2 $ and $ A_5$ are not $G$-equivalent. 
 
  Moreover
 \[P_{C_p^2}(s)^{-1}= \left(\sum_{i=0}^{\infty} \left(\frac{1}{p^s} \right) ^i \right)\left(\sum_{j=0}^{\infty} \left( \frac{p}{p^s} \right) ^j \right) 
 =\sum_{n=0}^{\infty} \frac{\left( \sum_{j=0}^n  p^j\right)}{p^{ns}}
  \]
Thus 
$ P_H(s)^{-1}=  \sum_{i=0}^{\infty}  c_n(H)/n^s $ 
where, for any $i $ and $k$,  
\[ c_{2^i5^k}(H)=\left( \sum_{l=0}^i  2^l\right)\left( \sum_{t=0}^k  5^t\right)= \frac{ \left(2^{i+1}-1\right) \left(5^{k+1}-1\right)}{4}\]
 and $c_n(H)=0$ if $n$ is not of the form $2^i5^k$. 
 
 As $P_G(s)=P_{H}(s) \cdot P_{A_5}(s)$ we can evaluate the coefficients $c_n(G)$ of $ P_G(s)^{-1}$ by the formula 
 \begin{eqnarray*}
 P_G(s)^{-1}\cdot P_{A_5}(s)&=& P_{H}^{-1}(s) 
\end{eqnarray*}
 which gives $  c_n(H) = \sum_{\substack{ur=n\\ u\neq 1}}  a_u(A_5)c_r(G)$ and thus 
 \begin{equation*}
  c_{2^i 5^k} (G)= 
 5 c_{2^i 5^{k-1}} (G)+ 10 c_{2^{i-1}5^{k-1}}(G) -  20 c_{2^{i-2}5^{k-1}}(G) +  c_{2^i 5^k}(H), 
 \end{equation*}
  for $i, k \ge 1$. 
 A straightforward calculation shows that $c_n (G)<0$  for $n= 50000$ and this is the first negative coefficient of $ P_G(s)^{-1}$. 
 
%
%
%
%

\end{document}